\newtheorem{theorem}{Theorem}[section]
\theoremstyle{definition}
\newtheorem{example}[theorem]{Example}
\theoremstyle{remark}
\newtheorem{remark}[theorem]{Remark}
\numberwithin{equation}{section}
\begin{document}

\title{ The Bloch groups and special values of Dedekind zeta functions}

\author{Chaochao Sun}
\address{ Corresponding author: Chaochao Sun }
\address{Chaochao Sun, School of  Mathematics and Statistics, Linyi University, Linyi,
China 276005}

\email{sunuso@163.com}
\author{Long Zhang}
\address{Long Zhang, School of Mathematics and Statistics, Qingdao University, Qingdao, China, 266071}
\email{zhanglong\_note@hotmail.com}
\thanks{The research was supported  by NNSFC Grant \#11601211.}



\subjclass[2000]{Primary 11R42 ; Secondary 11R70, 11Y40,19F27.}



\keywords{Bloch group, zeta function, Lichtenbaum conjecture}

\begin{abstract}
In this paper, we compare the two definitions of Bloch group, and survey the elements in Bloch group. We confirm the Lichtenbaum conjecture on the field $\mathbb{Q}(\zeta_p)$ under the assumption the truth of the base of the Bloch group of $\mathbb{Q}(\zeta_p)$ and the relations of $K_2$ group. We also study the Lichtenbaum conjecture on non-Galois fields. By PARI, we get some equations of  the zeta functions on special values and the structure of tame kernel of these fields.
\end{abstract}

\maketitle



\section{Introduction}

The special values of zeta function of number fields are an interesting field in number theory. When we consider the residue of zeta function,  there is a class number formula as following
\begin{equation}
R_1=-\frac{w}{h}\lim_{s\to 0}\zeta_F(s)s^{-(r_1+r_2-1)}
\end{equation}
where $R_1$ is the Dirichlet regulator of the field $F$, $\zeta_F(s)$ is the Dedekind's zeta function, $w$ is the root number of unity and $h$ is the class number.

In order to generalize the formula (1.1) to higher K-theory, Borel\cite{Bo} has introduced morphisms
$$r: K_{2m-1}(\mathfrak{o}_F)\to V_m$$
where $m\geq 2$, $\mathfrak{o}_F$ is the algebraic integer of number field $F$, $V_m$ is a real vector space of dimension
\[
\dim_{\mathbb{R}}V_m=d_m=
\begin{cases}
r_1+r_2  & \mbox{if}\ m \ \mbox{is odd}, m>0,\\
r_2   & \mbox{if}\ m \ \mbox{is even}
\end{cases}
\]
where $r_1$(respectively $r_2$) is the number of real(respectively complex) places of $F$.
Borel has proved that $r(K_{2m-1}(\mathfrak{o}_F))$ is a lattice of $V_m$, so the rank of $K_{2m-1}(\mathfrak{o}_F)$ is $d_m$.
Let $R_m(F)$ be a twisted version of the $m$th Borel regulator(see \cite{Bo2}), the twisted regulator map $r_m$ being a map

$$r_m: K_{2m-1}(\mathfrak{o}_F)\to [(2\pi i)^{m-1}\mathbb{R}]^{d_m} $$
Borel proved that, up to a rational factor, $R_m(F)$ is equal to $\zeta^*_F(1-m)$, the first non-vanishing Taylor coefficient of $\zeta_F (s)$ at $s=1-m$.
 Lichtenbaum's conjecture\cite{Li}(as modified by Borel \cite{Bo2}), tries to interpret this rational factor and asks whether for all number fields and for any integer $m\geq2$ there is a relation of the form
$$\lim_{s\to 1-m}\zeta_F (s)(s-1+m)^{-d_m}=\pm\frac{\#K_{2m-2}(\mathfrak{o}_F)}{\#K_{2p-1}^{\text{ind}}(\mathfrak{o}_F)_{\text{tor}}}\cdot R_m(F).$$
For $m=2$ and $F$ totally-real abelian it has been proved (up to a power of 2) by Mazur and Wiles \cite{MW} as a consequence of their proof of the main conjecture of Iwasawa theory. In\cite{KNF}, Kolster, Nguyen Quang Do and Fleckinger  have proved a modified version of the conjecture (also up to a power of 2) for all abelian fields $F$ and $m\geq 2$.

When $m=2$, Bloch \cite{Bl} suggested and D.Burns, R.de Jeu, H.Gangl\cite{DB} finally proved that Borel's regulator map can be given  in terms of the Bloch-Wigner dilogarithm $D_2(z)$ as a map on the Bloch group $B(F)$. While for Bloch group, there exist two kinds of definitions, see \cite{B.J2} and \cite{B.J3}. In section 1, we compare the two definitions of Bloch of number field(Theorem2.1)and discuss the relations of elements in Bloch group. Let $\widetilde{R}_2(F)$ be the second dilogarithmic regulator(see \cite{B.J3}), $w_2(F)$ be the number of roots of unity in the compositum of all quadratic extensions of $F$. Then the Lichtenbaum conjecture can be read as follows
\begin{equation}
|\zeta_F^*(-1)|=\frac{\widetilde{R}_2(F)\cdot\#K_2(\mathfrak{o}_F)}{w_2(F)}
\end{equation}
In section 2, assume two conjectures, we can prove the above version Lichtenbaum conjecture on the field $\mathbb{Q}(\zeta_p)$. In sectoin 3, we get some fields which are not Galois. On these fields, we get some functional equations on zeta functions and the dilogarithm functions when $s=2$ by comparing the numerical results. Using PARI, we get the structures of $\#K_2(\mathfrak{o}_F)$.

\section{The Blobh group}

Let $F$ be a field of char$(F)=0$, $\mathbb{Z}[F]:=\bigoplus\limits_{1\neq a\in F^{\times}
 }\mathbb{Z}[a]$ be a free abelian group generated by base $[a]$.
We have a natural  map
\begin{align*}
 \partial : \mathbb{Z}[F] &\to F^{\times}\wedge F^{\times}:=F^{\times}\otimes F^{\times}/\langle a\otimes (-a)\rangle\\
 [a]&\mapsto \overline{a\otimes (1-a)}
\end{align*}

Let $H$ be the subgroup generated by the elements $[a]+[1-a],[a]+[a^{-1}],[a]+[b]+[\frac{1-a}{1-ab}]+[1-ab]+[\frac{1-b}{1-ab}] $,
$a,b\in F^{\times}\setminus \{1\},ab\neq1$. It is easy to check that $H\subseteq \ker\partial$ and the Bloch group of $F$ is defined to be
$$ B(F):=\ker\partial/H$$

In Suslin's paper\cite{S.A}, the Bloch group is defined by another form, we state it as follows.
Let $\varphi$ be a map
\begin{align*}
 \varphi: \mathbb{Z}[F] &\to F^{\times}\otimes F^{\times}/\langle x\otimes y +y\otimes x\rangle\\
 [a]&\mapsto \overline{a\otimes (1-a)}
\end{align*}
First, we have(see \cite{S.A})
 $$\varphi([x]-[y]+[\frac{y}{x}]-[\frac{1-x^{-1}}{1-y^{-1}}]+[\frac{1-x}{1-y}])=\overline{x\otimes(\frac{1-x}{1-y})+(\frac{1-x}{1-y})\otimes x}=0.$$
Let $I$ be the subgroup of $\mathbb{Z}[F]$ generated by the elements$[x]-[y]+[\frac{y}{x}]-[\frac{1-x^{-1}}{1-y^{-1}}]+[\frac{1-x}{1-y}]$. Then
the Bloch group in Suslin's paper is defined by
$$\mathcal{B}(F):=\ker\varphi/I$$
Although the definitions of Bloch group are a little different,in fact, they differ at most by torsion. We have the following result
\begin{theorem}  The two kinds of Bloch groups $B(F)$ and $\mathcal{B}(F)$ are different by torsion. Furthermore,
$$B(F)\otimes\mathbb{Z}[\frac{1}{6}]\cong \mathcal{B}(F)\otimes\mathbb{Z}[\frac{1}{6}].$$
\end{theorem}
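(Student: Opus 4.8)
The plan is to produce a natural homomorphism $\Phi\colon \mathcal{B}(F)\to B(F)$, induced by the identity of $\mathbb{Z}[F]$, whose kernel is killed by $6$ and whose cokernel is killed by $2$; this gives at once that $B(F)$ and $\mathcal{B}(F)$ differ only by torsion and, after tensoring with $\mathbb{Z}[\tfrac16]$, the claimed isomorphism. For $\Phi$ to exist one must check $\ker\varphi\subseteq\ker\partial$ and $I\subseteq H$; its kernel is then $(H\cap\ker\varphi)/I$ and its cokernel is $\ker\partial/(\ker\varphi+H)$, and it remains to bound these.

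First I would compare the two target modules. The relation $a\otimes(-a)=0$ defining $F^{\times}\wedge F^{\times}$ forces anti-symmetry there: expanding $(ab)\otimes(-ab)=0$ and cancelling the terms $a\otimes(-a)$ and $b\otimes(-b)$ gives $a\otimes b+b\otimes a=0$ in $F^{\times}\wedge F^{\times}$. Hence there is a natural surjection $q\colon F^{\times}\otimes F^{\times}/\langle x\otimes y+y\otimes x\rangle\to F^{\times}\wedge F^{\times}$ with $q\circ\varphi=\partial$. Its kernel is generated by the images $\overline{a\otimes(-a)}$, and, using the anti-symmetry valid already in the source, one has $\overline{ab\otimes(-ab)}=\overline{a\otimes(-a)}+\overline{b\otimes(-b)}$, so $a\mapsto\overline{a\otimes(-a)}$ is a homomorphism $F^{\times}\to F^{\times}\otimes F^{\times}/\langle x\otimes y+y\otimes x\rangle$; since $\overline{c^{2}\otimes(-c^{2})}=2\,\overline{c\otimes(-1)}+4\,\overline{c\otimes c}=0$ (because $\overline{c\otimes 1}=0$ and $\overline{c\otimes c}$ is $2$-torsion), this homomorphism factors through $F^{\times}/(F^{\times})^{2}$ and so $\ker q$ is killed by $2$. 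In particular $\ker\varphi\subseteq\ker\partial$, and $\varphi$ maps $\ker\partial$ into $\ker q$ with kernel exactly $\ker\varphi$, so $\ker\partial/\ker\varphi$ embeds into $\ker q$ and is $2$-torsion.

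The heart of the argument, and the step I expect to be the main obstacle, is the comparison of $H$ and $I$ inside $\mathbb{Z}[F]$. Modulo $H$ one has $[1-ab]=-[ab]$, so the symmetric five-term relator generating $H$ is congruent to the Rogers relator $[a]+[b]-[ab]+[\tfrac{1-a}{1-ab}]+[\tfrac{1-b}{1-ab}]$. All forms of the five-term relation differ by the $S_{3}$-action on the cross-ratio, i.e.\ by the substitutions $z\mapsto z^{-1}$ and $z\mapsto 1-z$, so Suslin's relator $[x]-[y]+[\tfrac{y}{x}]-[\tfrac{1-x^{-1}}{1-y^{-1}}]+[\tfrac{1-x}{1-y}]$ equals a Rogers relator in suitable variables plus an explicit integral combination of the elements $[z]+[z^{-1}]$ and $[z]+[1-z]$; since all of these lie in $H$, we get $I\subseteq H$ and $\Phi$ is defined. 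For the reverse, I would use the classical fact (present in \cite{S.A}) that modulo the subgroup generated by the five-term relators the elements $[z]+[z^{-1}]$ and $[z]+[1-z]$ are constants independent of $z$ and that these constants are annihilated by $6$; consequently every generator of $H$ is torsion modulo $I$, so $H/I$ is killed by $6$ and $H\otimes\mathbb{Z}[\tfrac16]=I\otimes\mathbb{Z}[\tfrac16]$.

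Combining the two steps, $\ker\Phi=(H\cap\ker\varphi)/I$ is a subgroup of $H/I$ and hence killed by $6$, while the cokernel $\ker\partial/(\ker\varphi+H)$ is a quotient of $\ker\partial/\ker\varphi$ and hence killed by $2$. Thus $\Phi$ has bounded torsion kernel and cokernel, which proves that $B(F)$ and $\mathcal{B}(F)$ differ only by torsion, and tensoring with $\mathbb{Z}[\tfrac16]$ makes $\Phi$ an isomorphism, giving $B(F)\otimes\mathbb{Z}[\tfrac16]\cong\mathcal{B}(F)\otimes\mathbb{Z}[\tfrac16]$. The genuinely delicate points, where one must either appeal to \cite{S.A} or carry out the change-of-variables bookkeeping, are the exact expression of Suslin's five-term relator in terms of the symmetric one together with inversion and reflection terms, and the determination of the precise order ($6$) of the inversion and reflection ``constants'' modulo the five-term subgroup.
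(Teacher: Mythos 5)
Your proposal is correct and follows essentially the same route as the paper: compare the two target groups (showing the discrepancy $\ker\partial/\ker\varphi$ is killed by $2$), show $I\subseteq H$ and $6H\subseteq I$ using Suslin's facts about $[z]+[z^{-1}]$ and $[z]+[1-z]$, and then invert $6$ on the resulting kernel--cokernel sequence. The only piece you leave as ``bookkeeping'' --- writing Suslin's five-term relator explicitly as a symmetric five-term relator plus inversion/reflection terms --- is exactly the identity the paper supplies in its equation (2.2), and your identification of the cokernel as $\ker\partial/(\ker\varphi+H)$ is in fact slightly more precise than the paper's displayed exact sequence.
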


\begin{proof}
Since
$$ x\otimes y +y\otimes x= xy\otimes(-xy) -x\otimes(-x)-y\otimes(-y)$$
$$2 (a\otimes(-a))=2 (a\otimes (-1)) +2 (a\otimes a)=a\otimes a+a\otimes a,$$
we have
$$\langle x\otimes y +y\otimes x\rangle \subseteq \langle a\otimes(-a)\rangle,2\langle a\otimes(-a)\rangle\subseteq \langle x\otimes y +y\otimes x\rangle,$$
where $a,x,y\in F^{\times}$. So, there exists the following inclusions
\begin{equation}\ker\varphi \subseteq \ker\partial, 2\ker\partial\subseteq\ker\varphi.\end{equation}

Another, we have $I\subseteq H$,we can show the inclusion from the generator:
\begin{multline}
[x]-[y]+[\frac{y}{x}]-[\frac{1-x^{-1}}{1-y^{-1}}]+[\frac{1-x}{1-y}]\\
=([y^{-1}]+[yx^{-1}]+[\frac{1-y^{-1}}{1-x^{-1}}]+[1-x^{-1}]+[\frac{y-x}{1-x}])+([x]+[x^{-1}])+([\frac{1-x}{1-y}]+[\frac{1-y}{1-x}])\\
-([y]+[y^{-1}])-([\frac{1-x^{-1}}{1-y^{-1}}]+[\frac{1-y^{-1}}{1-x^{-1}}])-([1-x^{-1}]+[x^{-1}])-([1-\frac{1-y}{1-x}]+[\frac{1-y}{1-x}]).
\end{multline}

By results in \cite{S.A},we have
\begin{gather}
2([x]+[x^{-1}]),6([x]+[1-x])\in I.
\end{gather}
So, by (2.2),(2.3), we have $6H\subset I$.

Then there is an exact sequence
\begin{equation}
0\to(\ker\varphi\cap H)/I\to\ker\varphi/I=\mathcal{B}(F)\to B(F)=\ker\partial/H \to\ker\partial/\ker\varphi\to 0.\end{equation}

 Because $6(\ker\varphi\cap H)/I =0, 2(\ker\partial/\ker\varphi)=0$, we obtain that $B(k)$ and $\mathcal{B}(k)$ are different by torsion. Tensor with $\mathbb{Z}[\frac{1}{6}]$ on (2.4), the flatness of $\mathbb{Z}[\frac{1}{6}]$ leads to get the isomorphism
$$B(F)\otimes\mathbb{Z}[\frac{1}{6}]\cong \mathcal{B}(F)\otimes\mathbb{Z}[\frac{1}{6}].$$
\end{proof}
\bigskip

The Bloch group are related with zeta function by  Bloch-Wigner function.
Now let us introduce  the Bloch-Wigner function:
$$D(z)=-\text{Im}\int_0^z\frac{\log(1-t)}{t}dt+\arg (1-z)\cdot \log|z|, \ z\in \mathbb{C}.$$
$D(z)$ is real analytic on $\mathbb{C}\setminus \{0, 1 \}$ and continuous at $0, 1 $. It satisfies that
$$D(\bar{z})=-D(z), D(z)=-D(1-z)=-D(z^{-1})$$
$$D(z_1)+D(z_2)+D(\frac{1-z_1}{1-z_1z_2})+D(1-z_1z_2)+D(\frac{1-z_2}{1-z_1z_2})=0$$
where $z,z_1,z_2 \in\mathbb{C}',z_1z_2\neq1$.

\begin{example} \quad Let $\zeta_p=e^{\frac{2\pi i}{p}}$. Then in the field $\mathbb{Q}(\zeta_3)$, $\zeta_3=-\overline{\zeta}_6$.
So, we have
\begin{equation}
D(\zeta_3)=\text{Im}(\sum_{n=1}^{\infty}\frac{\zeta_3^n}{n^2})
=\frac{\sqrt{3}}{2}\sum_{n=1}^{\infty}\frac{\chi_3(n)}{n^2}
=\frac{\sqrt{3}}{2}L(\chi_3,2)
\end{equation}
where $\chi_3$ is the primitive Dirichlet character with conductor 3.

Similarly, we have
\begin{equation}D(\zeta_6)=\frac{\sqrt{3}}{2}(L(\chi_6,2)+\frac{1}{4}L(\chi_3,2))\end{equation}
where  $\chi_6$ is the primitive Dirichlet character with conductor 6.

On the other hand, we have
\begin{align*}
L(\chi_3,2)&=1-\frac{1}{2^2}+\frac{1}{4^2}-\frac{1}{5^2}+\frac{1}{7^2}-\frac{1}{8^2}+\cdots\\
&=(1-\frac{1}{5^2}+\frac{1}{7^2}-\frac{1}{11^2}+\cdots)-(\frac{1}{2^2}-\frac{1}{4^2}+\frac{1}{8^2}-\frac{1}{10^2}+\cdots)\\
&=L(\chi_6,2)-\frac{1}{4}L(\chi_3,2)
\end{align*}
So, we have
\begin{equation}L(\chi_6,2)/L(\chi_3,2)=\frac{5}{4}.\end{equation}

 By(2.5),(2.6) and (2.7), we have $D(\zeta_6)/D(\zeta_3)=\frac{3}{2}$. This relation can be reflected onto the elements of Bloch group.
In fact, we have the following result

\bigskip
{\bf Claim 1}\quad $2[\zeta_6]=3[\zeta_3]\in B(\mathbb{Q}(\zeta_3))$

\begin{proof}\quad Suppose $\zeta_n \in F$, then we have $[x^n]=n([x]+[\zeta_nx]+\cdots+[\zeta_n^{n-1}x])$ in $B(F)$.
That's because
\begin{align*}
x^n\otimes(1-x^n)&=n(x\otimes((1-x)\cdots(1-\zeta_n^{n-1}x)))\\
&=n(x\otimes(1-x)+\cdots +x\otimes(1-\zeta_n^{n-1}x))\\
&=n(x\otimes(1-x)+\cdots +\zeta_n^{n-1}x\otimes(1-\zeta_n^{n-1}x)).
\end{align*}
Now,let $\zeta_n=-1, x=\zeta_6\in \mathbb{Q}(\zeta_3)$. Then in $B(\mathbb{Q}(\zeta_3))$ we get
$$2[\zeta_6]=[\zeta_3]-2[-\zeta_6]=[\zeta_3]-2[\zeta_3^{-1}]=3[\zeta_3]. $$
\end{proof}
\end{example}

\begin{example} \quad Let $\zeta_5=\exp(2\pi i/5), x_1=1+\zeta_5+\zeta_5^2, x_2=-\zeta_5^4$. In \cite{B.J1}, Browkin found that
 $b_1:=2[x_1]+4[x_2]\in B(\mathbb{Q}(\zeta_5))$. Let $\sigma$ be a automorphism of $ B(\mathbb{Q}(\zeta_5))$ such that
 $\sigma(\zeta_5)=\zeta_5^2$. Then $b_2:=2[\sigma(x_1)]+4[\sigma(x_2)]\in B(\mathbb{Q}(\zeta_5))$.
Let $a_1=5[\zeta_5], a_2=5[\zeta_5^2]\in B(\mathbb{Q}(\zeta_5))$.
In fact, we have $a_1=b_1, \ a_2=b_2$ in $B(\mathbb{Q}(\zeta_5))$. Now, we show how to get it.
Because $(1+\zeta_5+\zeta_5^2)(1+\zeta_5^3)=1$, we get
\begin{align*}
b_1&=2[\frac{1}{1+\zeta_5^3}]+4[-\zeta_5^4]\\
&=-2[1+\zeta_5^3]+2[\zeta_5^3]-4[\zeta_5^4]\\
&=2[-\zeta_5^3]+2[\zeta_5^3]-4[\zeta_5^4]\\
&=[\zeta_5^6]+4[\zeta_5]=5[\zeta_5]\\
&=a_1
\end{align*}
\end{example}

Moreover, $\zeta_F^*(-1)=0.0248111839$. By the Lichtenbaum conjecture:
\begin{equation*}
|\zeta_F^*(-1)|=\frac{\widetilde{R}_2(F)\cdot\#K_2(\mathfrak{o}_F)}{w_2(F)}.
\end{equation*}
It is easy to see that $w_2(F)=120$. Assuming dilogarithmic lattice $\Lambda$ in $\mathbb{R}^2$ generated by the vectors
$$(\widetilde{D}(b_1),\widetilde{D}(\sigma(b_1)))\quad \text{and}\quad (\widetilde{D}(b_2),\widetilde{D}(\sigma(b_2)))$$
is full lattice, where $\widetilde{D}(z)=\frac{1}{\pi}D(z)$. Substituting the above numerical data we get $\#K_2(O_F)=1$. It is proven in \cite{Zh} that $\#K_2(O_F)=1$.
\bigskip

\section{Special value of zeta function of $\mathbb{Q}(\zeta_p)$ }

Now, we want to study the Lichtenbaum conjecture on the special value of zeta function at $-1$ in the case $F=\mathbb{Q}(\zeta_p)$,  where $p$ be an odd prime number, $\zeta_p$ be a  primitive root of unity. It is easy to see that $p[\zeta_p^i]\in B(\mathbb{Q}(\zeta_p)), i=1,\cdots, \frac{p-1}{2}$. The subgroup $Z$ generated by  $p[\zeta_p],p[\zeta_p^2],\cdots,p[\zeta_p^{\frac{p-1}{2}}]$ is a finite index subgroup of the Bloch group $B(\mathbb{Q}(\zeta_p))$. The covolume of $Z$ in $\mathbb{R}^{\frac{p-1}{2}}$ under the regulator map $D$ is denoted by $R$.

\begin{theorem}
The covolume of $Z$ is
$$R=(2\pi)^{\frac{1-p}{2}}p^{\frac{3(p-1)}{4}}\prod_{\chi \text{odd char.}}|L(\chi,2)|,$$
where  $\chi$ is the character of the group $(\mathbb{Z}/p)^*$.
\end{theorem}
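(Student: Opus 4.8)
Here is the strategy I would follow. It rests on three ingredients: rewriting the covolume as a $\tfrac{p-1}{2}\times\tfrac{p-1}{2}$ determinant of Bloch--Wigner values, diagonalising that determinant by the odd half of the character table of $(\mathbb{Z}/p)^*$, and identifying the resulting factors with $L$-values via Gauss sums. First I would make the regulator matrix explicit. The complex places of $F=\mathbb{Q}(\zeta_p)$ correspond to the embeddings $\sigma_a:\zeta_p\mapsto\zeta_p^a$; since $\sigma_a$ and $\sigma_{-a}$ are complex conjugate one takes $a$ in $S:=\{1,\dots,\tfrac{p-1}{2}\}$ as a system of representatives, and because $D(\overline{z})=-D(z)$ the choice of representatives only flips signs, hence does not affect the covolume. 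Normalising the regulator as $\widetilde D=\tfrac1\pi D$ as in Example~2.3, the generator $p[\zeta_p^{\,i}]$ of $Z$ maps to $p\bigl(\widetilde D(\zeta_p^{\,ai})\bigr)_{a\in S}$, and since $Z$ has finite index in $B(F)$ the $p[\zeta_p^{\,i}]$ ($i\in S$) are $\mathbb{Z}$-linearly independent modulo torsion, so their images span a full lattice. Hence
\[
R=p^{(p-1)/2}\,\pi^{-(p-1)/2}\,\bigl|\det\bigl(g(ia)\bigr)_{i,a\in S}\bigr|,
\]
where $g$ is the odd function on $(\mathbb{Z}/p)^*$ with $g(k)=D(\zeta_p^{\,k})$; oddness ($g(-k)=-g(k)$) is exactly what produces the entries $D(\zeta_p^{\,ai})$ when $ai\notin S$.

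Next I would compute the discrete Fourier coefficients of $g$ along Dirichlet characters. On the unit circle $D(e^{i\theta})=\text{Im}\sum_{n\ge1}\frac{e^{in\theta}}{n^2}=\sum_{n\ge1}\frac{\sin n\theta}{n^2}$, so $g(k)=\sum_{n\ge1}\frac{\sin(2\pi kn/p)}{n^2}$. For a character $\chi$ modulo $p$, with Gauss sum $\tau(\chi)=\sum_k\chi(k)\zeta_p^{\,k}$, the substitution $k\mapsto kn^{-1}$ (for $p\nmid n$) gives $\sum_k\chi(k)\zeta_p^{\,\epsilon kn}=\overline{\chi}(\epsilon n)\tau(\chi)$ for $\epsilon=\pm1$, so $\sum_k\chi(k)\sin(2\pi kn/p)$ equals $\tfrac1i\overline{\chi}(n)\tau(\chi)$ when $\chi$ is odd and vanishes when $\chi$ is even. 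Summing against $1/n^2$,
\[
\widehat g(\chi):=\sum_{k\in(\mathbb{Z}/p)^*}g(k)\chi(k)=\frac{\tau(\chi)}{i}\,L(\overline{\chi},2)\ \ (\chi\ \text{odd}),\qquad \widehat g(\chi)=0\ \ (\chi\ \text{even}).
\]
Since $p$ is prime every nontrivial $\chi$ is primitive, so $|\tau(\chi)|=\sqrt p$ and $|\widehat g(\chi)|=\sqrt p\,|L(\overline{\chi},2)|$ for odd $\chi$.

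The core step is the diagonalisation. Let $N=\bigl(\chi(i)\bigr)_{i\in S,\ \chi\ \text{odd}}$ be the square matrix of size $\tfrac{p-1}{2}$ built from the $\tfrac{p-1}{2}$ odd characters. Folding the sum over $(\mathbb{Z}/p)^*$ onto $S$ and using that both $g$ and $\chi$ are odd, I would verify $\sum_{a\in S}g(ia)\overline{\chi}(a)=\tfrac12\chi(i)\widehat g(\overline{\chi})$ for $\chi$ odd, i.e. $\bigl(g(ia)\bigr)_{i,a\in S}\,\overline{N}=N\cdot\text{diag}\bigl(\tfrac12\widehat g(\overline{\chi})\bigr)$. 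Here $N$ is invertible, since a relation $\sum_\chi c_\chi\chi(i)=0$ valid on $S$ extends by oddness to all of $(\mathbb{Z}/p)^*$, whereupon linear independence of characters forces $c_\chi=0$. Taking absolute values of determinants and using $|\det\overline{N}|=|\det N|$,
\[
\bigl|\det(g(ia))_{i,a\in S}\bigr|=\prod_{\chi\ \text{odd}}\tfrac12\,|\widehat g(\chi)|=2^{-(p-1)/2}\,p^{(p-1)/4}\prod_{\chi\ \text{odd}}|L(\chi,2)|,
\]
the last equality using that $\chi\mapsto\overline{\chi}$ permutes the odd characters. Substituting into the formula for $R$ and simplifying $p^{(p-1)/2}p^{(p-1)/4}=p^{3(p-1)/4}$ and $\pi^{-(p-1)/2}2^{-(p-1)/2}=(2\pi)^{(1-p)/2}$ yields the claimed value.

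I expect the diagonalisation to be the one genuinely delicate point: one must recognise that the ``half'' determinant $\det(D(\zeta_p^{\,ai}))_{i,a\in S}$ is controlled by the odd half of the character table of $(\mathbb{Z}/p)^*$, handle carefully the factor $2^{-(p-1)/2}$ created by folding $(\mathbb{Z}/p)^*$ onto $S$, and confirm that $N$ is nonsingular. The Fourier expansion of $D$, the Gauss-sum evaluation, and the final accounting of powers of $2$, $\pi$ and $p$ are then routine. As a check, for $p=3$ one has $S=\{1\}$, the only odd character is the quadratic character $\chi_3$, and the formula returns $R=\tfrac{3\sqrt3}{2\pi}\,|L(\chi_3,2)|$, in agreement with Example~2.2.
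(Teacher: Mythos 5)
Your proposal is correct and follows essentially the same route as the paper: the paper quotes from [KNF, p.~713] exactly the character--diagonalization identity you derive by hand (your factor $2^{-(p-1)/2}$ from folding $(\mathbb{Z}/p)^*$ onto $S$ is the $2$ in their $\frac{p\chi(a)D_2(e^{2\pi ia/p})}{2\pi}$), and quotes from Gross exactly the Gauss-sum evaluation $|L(2,\chi^{-1})|=p^{-1/2}\bigl|\sum_a\chi(a)D_2(e^{2\pi ia/p})\bigr|$ that you obtain from the Fourier expansion of $D$ on the unit circle. The only difference is that you prove these two ingredients rather than cite them, and your final bookkeeping of the powers of $2$, $\pi$ and $p$ agrees with the paper's.
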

\begin{proof}
According to \cite{KNF}P.713, we have
\begin{equation}
R=|\det \alpha|=\prod_{\chi \text{odd char.}}|\sum_{a\in(\mathbb{Z}/p)^*}\frac{p\chi(a)D_2(e^{2\pi ia/p})}{2\pi}|,
\end{equation}
where $D_2(z)=\sum_{n\geq1}\frac{z^n}{n^2}$, $\chi$ is the character of the group $(\mathbb{Z}/p)^*$.

On the other hand, by \cite{Gr}P.12, we get
\begin{equation}
|L(2,\chi)|=|g(\chi)| \cdot |\sum_{a\in(\mathbb{Z}/p)^*}\chi^{-1}(a)D_2(e^{-2\pi ia/p})|.
\end{equation}

 By the definition of $g(\chi)$ in \cite{Gr} and the property of Gauss sum, we get$|g(\chi^{-1})|=p^{-\frac{1}{2}}$, so from (3.2)we have
\begin{equation}
|L(2,\chi^{-1})|=p^{-\frac{1}{2}}|\sum_{a\in(\mathbb{Z}/p)^*}\chi(a)D_2(e^{2\pi ia/p})|.
\end{equation}

Combining (3.1) and (3.3), we get
$$R=(2\pi)^{\frac{1-p}{2}}p^{\frac{3(p-1)}{4}}\prod_{\chi \text{odd char.}}|L(\chi,2)|.$$
\end{proof}

{\bf Conjecture 1}\quad  $p[\zeta_p^i]\in B(\mathbb{Q}(\zeta_p)), i=1,\cdots, \frac{p-1}{2}$ is the free part of Bloch group of $B(\mathbb{Q}(\zeta_p))$.

Another conjecture is related with the $K_2$ groups of the algebraic integers of $\mathbb{Q}(\zeta_p)$ and $\mathbb{Q}(\zeta_p)^+$, which denotes the maximal real subfield of $\mathbb{Q}(\zeta_p)$. Then we have the following conjecture.

{\bf Conjecture 2}\quad There is a natural exact sequence of $F=\mathbb{Q}(\zeta_p)$
$$0\to \ker \psi \to K_2(\mathcal{O}_{F^+})\xrightarrow{\psi}{} K_2(\mathcal{O}_{F})\to 0,$$
and the order of $\ker \psi$ is $2^{\frac{p-1}{2}}$.

\begin{remark} Conjecture 2 is true for $p=3,5$. For $p=3$, it is easy to check that this case is true. For $p=5$, by \cite{Zh} we know $K_2(\mathcal{O}_{F})=0$. Since $\#K_2(\mathcal{O}_{F^+})=4$, we get Conjecture 2 holds for $p=5.$
\end{remark}

\begin{theorem}  Assuming the above two conjecture,  the zeta function of $F=\mathbb{Q}(\zeta_p)$ has the following equation
$$|\zeta_{F}^*(-1)|=\frac{\#K_2(\mathcal{O}_{F})}{w_2(F)}\widetilde{R}_2(F)$$
\end{theorem}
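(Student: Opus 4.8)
The plan is to carry the analytic class number formalism of Section 3 through to the end, reducing the assertion to the Birch--Tate formula for the totally real subfield $F^+=\mathbb{Q}(\zeta_p)^+$ together with the two conjectures, while tracking the archimedean and $2$-adic constants. First I would factor $\zeta_F(s)=\prod_\chi L(s,\chi)$ over the characters $\chi$ of $(\mathbb{Z}/p)^*$ and split off the even part, $\zeta_F(s)=\zeta_{F^+}(s)\prod_{\chi\text{ odd}}L(s,\chi)$. For even $\chi$ the point $s=-1$ is not a trivial zero, so $\zeta_{F^+}(-1)\neq0$; for each of the $(p-1)/2$ odd $\chi$ it is a simple trivial zero, and the nonvanishing of $L(2,\bar\chi)$ will show $L'(-1,\chi)\neq0$. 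Thus $\zeta_F$ vanishes at $-1$ to order $(p-1)/2=d_2$, and
\begin{equation*}
\zeta_F^*(-1)=\zeta_{F^+}(-1)\cdot\prod_{\chi\text{ odd}}L'(-1,\chi).
\end{equation*}

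Next I would evaluate $L'(-1,\chi)$ for odd $\chi$ by the functional equation: with $\Lambda(s,\chi)=(p/\pi)^{(s+1)/2}\Gamma(\tfrac{s+1}{2})L(s,\chi)$ and $\Lambda(s,\chi)=W(\chi)\Lambda(1-s,\bar\chi)$, $|W(\chi)|=1$, comparing the two sides at $s=-1$ and $s=2$ (the simple pole of $\Gamma$ at $0$ cancels the zero of $L$) gives $|L'(-1,\chi)|=\tfrac{p^{3/2}}{4\pi}|L(2,\bar\chi)|$ --- essentially the relation already isolated in (3.2)--(3.3). Since $\chi\mapsto\bar\chi$ permutes the odd characters, Theorem 3.1 then gives
\begin{equation*}
\prod_{\chi\text{ odd}}|L'(-1,\chi)|=\Bigl(\tfrac{p^{3/2}}{4\pi}\Bigr)^{\frac{p-1}{2}}\prod_{\chi\text{ odd}}|L(2,\chi)|=\Bigl(\tfrac{p^{3/2}}{4\pi}\Bigr)^{\frac{p-1}{2}}(2\pi)^{\frac{p-1}{2}}p^{-\frac{3(p-1)}{4}}R=2^{-\frac{p-1}{2}}R.
\end{equation*}
Conjecture 1 is used exactly at this point: it ensures that $Z$ is the whole free part of $B(F)$, so that $R$ coincides with the dilogarithmic regulator $\widetilde{R}_2(F)$ in the normalisation of \cite{KNF}; without it the covolume of Theorem 3.1 would only be a rational multiple of $\widetilde{R}_2(F)$.

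Then I would invoke the Birch--Tate formula $|\zeta_{F^+}(-1)|=\#K_2(\mathcal{O}_{F^+})/w_2(F^+)$ for the abelian totally real field $F^+$ (a theorem; see \cite{MW}, \cite{KNF}), substitute $\#K_2(\mathcal{O}_{F^+})=2^{(p-1)/2}\#K_2(\mathcal{O}_F)$ from Conjecture 2, and combine with the two displays above and $R=\widetilde{R}_2(F)$ to obtain
\begin{equation*}
|\zeta_F^*(-1)|=\frac{2^{(p-1)/2}\#K_2(\mathcal{O}_F)}{w_2(F^+)}\cdot 2^{-\frac{p-1}{2}}\widetilde{R}_2(F)=\frac{\#K_2(\mathcal{O}_F)}{w_2(F^+)}\widetilde{R}_2(F).
\end{equation*}
Finally I would check the elementary point $w_2(F)=w_2(F^+)$: for every prime $\ell$ the fields $\mathbb{Q}(\zeta_p)$ and $\mathbb{Q}(\zeta_p)^+$ intersect $\mathbb{Q}(\mu_{\ell^n})$ in the same subfield, so $\mathrm{Gal}(F(\mu_{\ell^n})/F)$ and $\mathrm{Gal}(F^+(\mu_{\ell^n})/F^+)$ have equal exponent and the same power of $\ell$ divides $w_2(F)$ and $w_2(F^+)$ (both equal $24p$ for $p\ge5$). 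This yields the claimed identity.

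The main obstacle I anticipate is not conceptual --- the deep inputs are either assumed (Conjectures 1 and 2) or imported (Birch--Tate for $F^+$) --- but the bookkeeping of $2$-powers: those coming from $\Gamma(\tfrac32)=\tfrac{\sqrt\pi}{2}$, from $|g(\chi)|=p^{-1/2}$, from the $2\pi$-normalisation of $R$ in Theorem 3.1, and from the factor $2^{(p-1)/2}$ of Conjecture 2 must cancel exactly, and one stray power of $2$ would break the equality. A secondary technical point, already flagged above, is to make the identification $R=\widetilde{R}_2(F)$ precise by reconciling the regulator normalisations of Section 2 with those of \cite{KNF}.
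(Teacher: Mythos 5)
Your proposal is correct and follows essentially the same route as the paper: decompose $\zeta_F$ into its even part $\zeta_{F^+}$ and the odd characters, use the functional equation to trade $s=-1$ for $s=2$, identify the resulting product of $L(2,\chi)$'s with $\widetilde{R}_2(F)$ via Theorem 3.1 and Conjecture 1, and finish with Birch--Tate for $F^+$ (Wiles), $w_2(F)=w_2(F^+)$, and Conjecture 2. The only cosmetic difference is that you apply the functional equation character by character to get $|L'(-1,\chi)|=\tfrac{p^{3/2}}{4\pi}|L(2,\bar\chi)|$, whereas the paper applies it to $\zeta_F$ and $\zeta_{F^+}$ as a whole; the $2$-power bookkeeping comes out identically, landing on the paper's intermediate identity $|\zeta_F^*(-1)|=2^{(1-p)/2}|\zeta_{F^+}(-1)|\widetilde{R}_2(F)$.
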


\begin{proof}
In \cite{Bl}, Bloch has calculated that
\begin{equation}|\det(D(\sigma_i(\zeta_p^j)))|=2^{\frac{1-p}{2}}p^{\frac{p-1}{4}}\prod_{\chi \text{odd char.}}|L(\chi,2)|,\end{equation}
where $\sigma_i:=\sigma^i, \sigma(\zeta_p)=\zeta_p^{2}$,  $\chi$ runs all the odd character of $\mathbb{F}_p^{\times}$.
Browkin defined $\widetilde{R}_2(F)=\frac{1}{\pi^{r_2}}|\det(D(\sigma_i(\varepsilon_j)))|$ to be the second regulator, where $\sigma_i$
is the complex places, $\varepsilon_j$ is the base of Bloch group $B(F)$,$1\leq i, j\leq r_2$. By Conjecture 1,  $\widetilde{R}_2(F)=R$  is the second regulator of $F=\mathbb{Q}(\zeta_p)$. So, we have
\begin{equation}
\widetilde{R}_2(F)=(2\pi)^{\frac{1-p}{2}}p^{\frac{3(p-1)}{4}}\prod_{\chi \text{odd char.}}|L(\chi,2)|
\end{equation}

The absolute value of discriminant of $F$ is $|d_F|=p^{p-2}$.  Decomposing the zeta function $\zeta_F(s)$
into the Dirichlet $L$-function, and taking use of the fact $\zeta_{F^+}(s)=\prod_{\chi \text{even char.}}L(\chi,s)$, we have
\begin{equation}\zeta_F(s)=\zeta_{F^+}(s)\cdot \prod_{\chi \text{odd char.}}L(\chi,s).
\end{equation}

Using (3.5),(3.6), $\Gamma^*(-1)=1$ and the function equation of $\zeta_F(s)$, we get that
\begin{equation}|\zeta_F^*(-1)|=2^{1-p}\pi^{1-p}p^{\frac{3p-9}{4}}\zeta_{F^+}(2)\widetilde{R}_2(F)
\end{equation}

Using the function equation of $\zeta_{F^+}(s)$ and $|\Gamma(-\frac{1}{2})|=2\pi^{\frac{1}{2}}$, we have
\begin{equation}\zeta_{F^+}(2)=2^{\frac{p-1}{2}}\pi^{p-1}|d_{F^+}|^{-\frac{3}{2}}|\zeta_{F^+}(-1)|,
\end{equation}
where $d_{F^+}$ is the discriminant of $F^+$.

So, combining (3.7) and (3.8) , we have
\begin{equation}|\zeta_F^*(-1)|=2^{\frac{1-p}{2}}p^{\frac{3p-9}{4}}|d_{F^+}|^{-\frac{3}{2}}|\zeta_{F^+}(-1)|\widetilde{R}_2(F).
\end{equation}

In fact, By the Theorem 3.11 in \cite{W.L}, we obtain that $d_{F^+}=p^{\frac{p-3}{2}}$, hence, $|d_{F^+}|^{\frac{3}{2}}=p^{\frac{3p-9}{4}}$. So, from (3.9), we get
\begin{equation}|\zeta_F^*(-1)|=2^{\frac{1-p}{2}}|\zeta_{F^+}(-1)|\widetilde{R}_2(F).
\end{equation}

Wiles\cite{Wi} has proved that the Birch-Tate conjecture is true for the abelian totally real field. So, for $F^+$ we have
\begin{equation}|\zeta_{F^+}(-1)|=\frac{\#K_2(\mathcal{O}_{F^+})}{w_2(F^+)}
\end{equation}
The method of calculating the number $w_2(F)$ can be found in Weibel paper \cite{W.C}. For $\mathbb{Q}(\zeta_p)$, we get that
\begin{equation}
w_2(F)=w_2(F^+)=
\begin{cases}
24,\ \  p=3;\\
24p,\ \ p\neq3.
\end{cases}
\end{equation}
Hence, from (3.10),(3.11) and (3.12), we get
$$|\zeta_{F}^*(-1)|=2^{\frac{1-p}{2}}\frac{\#K_2(\mathcal{O}_{F^+})}{w_2(F)}\widetilde{R}_2(F)$$

By Conjecture 2, we know the  Lichtenbaum conjecture is true for $\mathbb{Q}(\zeta_p)$.
\end{proof}

\textbf{Remark3.4} Professor T.Nguyen Quang Do has recently told us that the Lichtenbaum conjecture has now been proved in full generality for abelian fields (see the literature
\cite{C} Chapter 9). We are grateful for his account of the status of the Lichtenbaum conjecture.

\bigskip

\section{Lichtenbaum conjecture on non-Galois fields}

\bigskip

Suppose $F$ is a number field with $r_2(F)=1$, then the free part of $B(F)$ is $\mathbb{Z}$ module of rank 1. In this section, we list some fields $F$ with $r_2(F)=1$. The elements of  Bloch group  $B(F)$ are constructed in a flexible way. Assume the base of Bloch group and the Lichtenbaum conjecture, we get a conjectural order of the $K_2$ group of $\mathcal{O}_F$.

 In \cite{Go}P.250, there is a theorem about the special value of Dedekind zeta function at 2 and the Borel regulator,which states as following
 \begin{theorem}  Let $\zeta_F(s)$ be the Dedekind zeta function of $F$. Then there exist
 $$ y_1,\cdots,y_{r_2}\in B(F)$$
 such that
 $$\zeta_F(2)=q\cdot \pi^{2(r_1+r_2)}\cdot|d_F|^{1/2}\cdot\det|D(\sigma_{r_1+j}(y_i))|$$
 where $1\leq i,j\leq r_2$ and $q$ is some rational number.
 \end{theorem}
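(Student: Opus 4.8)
The plan is to follow Borel's theorem together with the description of the Borel regulator via the Bloch–Wigner dilogarithm on the Bloch group, exactly as in the references already cited in the excerpt. The statement is essentially a restatement of Borel's rationality theorem, combined with the functional equation of $\zeta_F(s)$ at $s=2$ and $s=-1$, plus the identification of the Borel regulator with the dilogarithmic regulator over $B(F)$ due to Bloch and Burns--de Jeu--Gangl. So the proof is a matter of assembling those inputs in the right order and tracking the power of $\pi$ and $|d_F|$.

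First I would recall Borel's theorem: $K_3(\mathfrak{o}_F)$ modulo torsion is a free abelian group of rank $r_2$, and the Borel regulator map $r_2\colon K_3(\mathfrak{o}_F)\otimes\mathbb{Q}\to \mathbb{R}^{r_2}$ has image a lattice whose covolume is, up to a nonzero rational multiple, $\zeta_F^*(-1)$ (equivalently, by the functional equation, a rational multiple of $\pi^{-2r_2}|d_F|^{-1/2}\zeta_F(2)$ after the appropriate normalization — here the exponent of $\pi$ in the statement, $2(r_1+r_2)$, comes from the archimedean $\Gamma$-factors at $s=2$ and the behavior at $s=-1$). Second, I would invoke the theorem of Bloch and of Burns--de Jeu--Gangl (cited as \cite{Bl}, \cite{DB} in the introduction) that the Borel regulator on $K_3(\mathfrak{o}_F)\otimes\mathbb{Q}$ agrees, up to a rational factor, with the dilogarithmic regulator: $B(F)\otimes\mathbb{Q}\cong K_3(\mathfrak{o}_F)\otimes\mathbb{Q}$ canonically, and under the embeddings $\sigma_{r_1+1},\dots,\sigma_{r_1+r_2}$ into $\mathbb{C}$ one has the regulator computed by $z\mapsto D(\sigma_{r_1+j}(z))$. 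Choosing $y_1,\dots,y_{r_2}\in B(F)$ to be a $\mathbb{Q}$-basis (indeed a $\mathbb{Z}$-basis of the free part, scaled to lie in $B(F)$) of the free part of $B(F)$, the matrix $\big(D(\sigma_{r_1+j}(y_i))\big)_{1\le i,j\le r_2}$ then has determinant equal to a nonzero rational multiple of the Borel covolume. Finally, combining these two identifications and absorbing all the $2$-powers, $p$-powers and other rational contributions into the single rational number $q$, one reads off
$$\zeta_F(2)=q\cdot\pi^{2(r_1+r_2)}\cdot|d_F|^{1/2}\cdot\det\big|D(\sigma_{r_1+j}(y_i))\big|,$$
which is the claim (the absolute value on the determinant is harmless since the sign can be moved into $q$, and if the free part of $B(F)$ is trivial, i.e.\ $r_2=0$, the determinant is the empty product $1$ and the statement reduces to the classical fact that $\zeta_F(2)\in\pi^{2r_1}|d_F|^{1/2}\cdot\mathbb{Q}$ for totally real $F$).

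The main obstacle — and the reason this is quoted as a known theorem rather than proved from scratch — is the precise bookkeeping of the transcendental normalization: matching Borel's normalization of $r_2$ against the dilogarithmic one requires keeping careful track of the factor $(2\pi i)^{m-1}$ appearing in the twisted regulator (here $m=2$), the $\Gamma$-factors in the functional equation relating $\zeta_F(-1)$ and $\zeta_F(2)$, and the convention used for $D(z)$ versus the normalized $\widetilde{D}(z)=\frac1\pi D(z)$. If one is only claiming the result up to an unspecified rational $q$, these factors are all swept into $q$ and the proof is short; the delicate part, which one would not want to redo here, is showing that $q$ has the particular shape predicted by the Lichtenbaum conjecture (that is the content of Theorem~3.3 above, under Conjectures 1 and 2). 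For the present statement it therefore suffices to cite Borel \cite{Bo}, \cite{Bo2} for the rank and covolume, Bloch \cite{Bl} and Burns--de Jeu--Gangl \cite{DB} for the dilogarithmic description of the regulator on $B(F)$, and the functional equation of $\zeta_F$ to convert the statement at $s=-1$ into one at $s=2$.
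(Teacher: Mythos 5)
The paper gives no proof of this statement at all: it is quoted directly from Goncharov \cite{Go}, p.~250, so there is no in-paper argument to compare against. Your outline is the standard derivation of that result --- Borel's rank and covolume theorem for $K_3(\mathfrak{o}_F)$, the Bloch--Suslin / Burns--de Jeu--Gangl identification of the Borel regulator with the dilogarithmic regulator on $B(F)\otimes\mathbb{Q}$, and the functional equation to pass from $s=-1$ to $s=2$ --- and since every normalization constant is swept into the unspecified rational $q$, the sketch is sound and matches how the cited source actually obtains the formula. Two harmless slips: the lattice is the image of $K_3(\mathfrak{o}_F)$ itself rather than of $K_3(\mathfrak{o}_F)\otimes\mathbb{Q}$; and for $r_2=0$ the Klingen--Siegel theorem gives $\zeta_F(2)\in\pi^{2r_1}\,|d_F|^{-1/2}\,\mathbb{Q}$, which agrees with the coset you wrote only because $|d_F|$ is rational (the same remark explains why the paper's own Example~4.2 has $|d_F|^{-3/2}$ where the theorem displays $|d_F|^{1/2}$).
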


 Using PARI, we find the equations between the special values of zeta function at $2$ and the  dilogarithm functions. These equations are expected to be given a proof.
Using the computing programm in \cite{BH}, we compute that all the K-groups are confirmed with the conjectural order and give their structures.

\begin{example} Consider the equations as follows
$$
\begin{cases}
1+y=x\\
1-y^{-1}=-x^4y^{-1}.
\end{cases}$$

We get that $x^3+x^2+x+2=0$. Let $\alpha$ be a root of this equation. Then $r_2(\mathbb{Q}(\alpha))=1$ and we can assume $\mathbb{Q}(\alpha)$ is complex. Now we claim that
$-4[\alpha]-[\alpha-1]\in B(\mathbb{Q}(\alpha))$. First, let $\beta=\alpha-1$, then we have $-4[\alpha]-[\alpha-1]=4[-\beta]+[\beta^{-1}]$. So,
\begin{align*}
\partial(4[-\beta]+[\beta^{-1}])&=4[(-\beta)\wedge(1+\beta)]+\beta^{-1}\wedge(1-\beta^{-1})\\
&=(-\beta)^{4}\wedge\alpha+\beta^{-1}\wedge(-\alpha^4\beta^{-1})\\
&=\beta^4\wedge\alpha+\beta^{-1}\wedge\alpha^4+\beta^{-1}\wedge(-\beta^{-1})\\
&=4\beta\wedge\alpha-4\beta\wedge\alpha\\
&=0.
\end{align*}
Hence, $-4[\alpha]-[\alpha-1]\in B(\mathbb{Q}(\alpha))$.

Assuming the Lichtenbaum conjecture and $-4[\alpha]-[\alpha-1]\in B(\mathbb{Q}(\alpha))$ being a base, we get the order of $K_2(\mathcal{O}_{\mathbb{Q}(\alpha)})$, i.e.
$\#K_2(\mathcal{O}_{\mathbb{Q}(\alpha)})=4$. In fact, let $F=\mathbb{Q}(\alpha),\theta=4[\alpha]+[\alpha-1]\in B(F)$. Using PARI we have
$$\alpha\approx0.176604982099662+1.202820819285479i,$$
$$R_2(F):=|D(\theta)|=D(\theta)\approx4.415332477453866,$$
$$\zeta_{F}(2)\approx1.516751720642021.$$
Because $r_1(F)=r_2(F)=1, |\Gamma^*(-1)|=1, |\Gamma(-\frac{1}{2})|=2\sqrt{\pi}$, by the function equation of $\zeta_F(s)$, we have
$$\zeta_F(2)=2^4\pi^5|d_F|^{-\frac{3}{2}}\zeta_F^*(-1).$$
Using the Lichtenbaum conjecture on $F=\mathbb{Q}(\alpha)$, we get
$$\zeta_F^*(-1)=\frac{\#K_2(\mathcal{O}_F)R_2(F)}{w_2(F)\pi}.$$
By Proposition 20,22 in \cite{W.C}, we know $w_2(F)=24$; By PARI, we find $d_F=-83$. At last, we get
$$\#K_2(\mathcal{O}_F)=\frac{3\cdot83^{\frac{3}{2}}\zeta_F(2)}{2\pi^4R_2}.$$
Using PARI, we get
$$\#K_2(\mathcal{O}_F)=3.999999999999999=4.$$
Using method in \cite{BH}, we have $K_2(\mathcal{O}_F)$ is isomorphic to $\mathbb{Z}/2\times\mathbb{Z}/2$.

 Moreover, we get the equation as follows by the numerical method
 $$\zeta_F(2)=\frac{8\pi^4}{3\cdot83^{\frac{3}{2}}}D(\theta).$$
\end{example}

\begin{example}  Consider the equations as follows
$$
\begin{cases}
1+y=x\\
1-y^{-1}=x^{2}
\end{cases}.$$
We get that $x^3-x^2-x+2=0$. Let $\alpha$ be its complex root and $F=\mathbb{Q}(\alpha)$. Then we know $r_1(F)=r_2(F)=1$ and we claim that
$-[\alpha^2]-2[\alpha]\in B(F)$. Let $\beta=\alpha-1$.
Then
\begin{align*}
\partial(-[\alpha^2]-2[\alpha])&=\partial([1-\alpha^2]+2[1-\alpha])\\
&=\partial([\beta^{-1}]+2[-\beta])\\
&=\beta^{-1}\wedge(1-\beta^{-1})+2[(-\beta)\wedge(1+\beta)]\\
&=\beta^{-1}\wedge\alpha^{2}+2[(-\beta)\wedge\alpha]\\
&=-2(\beta\wedge\alpha)+2(\beta\wedge\alpha)\\
&=0.
\end{align*}

Let $\theta=[\alpha^2]+2[\alpha]$. Computing by PARI, we get that
$$\alpha\approx1.102784715200295+0.665456951152813i,$$
$$D(\theta)\approx2.568970600936709,$$
$$\zeta_F(2)\approx1.472479780199297,$$
$$d_F=-59 .$$
And we have $w_2(F)=24$. Assuming the Lichtenbaum conjecture and $\theta$ the base of $B(F)$, then we have
$$\#K_2(\mathcal{O}_F)=\frac{3\cdot59^{\frac{3}{2}}\zeta_F(2)}{2\pi^4R_2}.$$
By PARI we have $\#K_2(\mathcal{O}_F)=4.000000000000000=4$. Using method in \cite{BH}, we have $K_2(\mathcal{O}_F)$ is isomorphic to $\mathbb{Z}/2\times\mathbb{Z}/2$. The equation of zeta function at 2 is
$$\zeta_F(2)=\frac{8\pi^4}{3\cdot59^{\frac{3}{2}}}D(\theta).$$
\end{example}

\begin{example}
 Consider the equations as follows
$$
\begin{cases}
1+y=x\\
1-y^{-1}=y^{-1}x^3
\end{cases}.$$
We have $x^3-x+2=0$. Let $\alpha$ be complex root of it, we have $r_2(\mathbb{Q(\alpha)})=1$.
Then, $\theta:=-6[1-\alpha]-2[\frac{1}{\alpha-1}]\in B(\mathbb{Q(\alpha)})$. In fact, let $\beta=\alpha-1.$ Then we have
\begin{align*}
\partial(-6[1-\alpha]-2[\frac{1}{\alpha-1}])&=\partial(-6[1-\alpha]-2[\beta^{-1}])\\
&=-6(-\beta\wedge\alpha)-2(\beta^{-1}\wedge\beta^{-1}\alpha^3)\\
&=-6(\beta\wedge\alpha)+6(\beta\wedge\alpha)\\
&=0.
\end{align*}

 By PARI, we obtain
$$\alpha\approx0.760689853402284+0.857873626595179i,$$
$$D(\theta)\approx7.517689896474569,$$
$$\zeta_F(2)\approx1.841207016617394,$$
$$d_F=-104.$$
Assuming Lichtenbaum conjecture and $\theta$ being the base of $B(F)$, since $w_2(F)=24$, we have
$$\#K_2(\mathcal{O}_F)=\frac{3\cdot104^{\frac{3}{2}}\zeta_F(2)}{2\pi^4D(\theta)}=4.000000000000000=4.$$
Using method in \cite{BH}, we have $K_2(\mathcal{O}_F)$ is isomorphic to $\mathbb{Z}/2\times\mathbb{Z}/2$ and the function equation is
$$\zeta_F(2)=\frac{8\pi^4}{3\cdot104^{\frac{3}{2}}}D(\theta).$$
\end{example}

\begin{example}
 Consider the following equations
$$
\begin{cases}
1-y=x^2\\
1-y^{-1}=y^{-3}x^{3}.
\end{cases}$$
It is easy to get the equation $x^4-2x^2+x+1=0.$ Let $\alpha$ be a complex root of this equation. Then we have $r_2(\mathbb{Q(\alpha)})=1$.
Let $\beta=1-\alpha^2$. Then we have
$$\partial(3[\beta]+2[\beta^{-1}])
=3\beta\wedge\alpha^2+2\beta^{-1}\wedge\beta^{-3}\alpha^3=0.$$
Hence, $[\beta]=3[\beta]+2[\beta^{-1}]\in B(F)$, that is, $[\alpha^2]\in B(F)$. Using PARI, we have
$$\alpha\approx1.007552359378179+0.513115795597015i$$
$$D(\alpha^2)\approx0.981368828892232$$
$$\zeta_F(2)\approx1.056940574599707$$
$$d_F=-283$$
Assuming Lichtenbaum conjecture and $[\alpha^2]$ being the base of $B(F)$, since $w_2(F)=24$, we have
$$\#K_2(\mathcal{O}_F)=\frac{3\cdot283^{\frac{3}{2}}\zeta_F(2)}{2^2\pi^6R_2}.$$
By PARI, we have $\#K_2(\mathcal{O}_F)=3.99999999999999=4$. Using method in \cite{BH}, we have $K_2(\mathcal{O}_F)$ is isomorphic to $\mathbb{Z}/2\times\mathbb{Z}/2$ and the function equation is
$$\zeta_F(2)=\frac{16\pi^6}{3\cdot283^{\frac{3}{2}}}D(\alpha^2).$$
\end{example}

\begin{example}
 Considering the equations as follows
$$
\begin{cases}
1-y=-x\\
1-y^{-1}=x^4,
\end{cases}$$
we have $x^4+x^3-1=0$. Let $\alpha$ be complex root of it, we have $r_2(\mathbb{Q(\alpha)})=1$
and $3[-\alpha]\in B(\mathbb{Q(\alpha)})$. By PARI, we have
$$\alpha\approx-0.219447472149275-0.914473662967726i,$$
$$R_2:\approx3D(-\alpha)=2.944106486676696,$$
$$\zeta_F(2)\approx1.056940574599707,$$
$$d_F=-283.$$
Assuming Lichtenbaum conjecture and $3[-\alpha]$ being the base of $B(F)$, since $w_2(F)=24$, we have
$$\#K_2(\mathcal{O}_F)=\frac{3^2\cdot283^{\frac{3}{2}}\zeta_F(2)}{2^2\pi^6R_2}=3.99999999999999=4.$$
Using method in \cite{BH}, we have $K_2(\mathcal{O}_F)$ is isomorphic to $\mathbb{Z}/2\times\mathbb{Z}/2$ and function equation is
$$\zeta_F(2)=\frac{16\pi^6}{3\cdot283^{\frac{3}{2}}}D(-\alpha).$$
\end{example}

\bigskip

\bibliographystyle{amsplain}

\begin{thebibliography}{10}
\bibitem{BH}K. Belabas,  H. Gangl, \emph{Generators and relation for $K_2\mathcal{O}_F$}, K-theory, \textbf{31}(2004),195-231.

\bibitem{Bl} S. Bloch, \emph{Higher regulators,algebraic K-theory and zeta functions of elliptic curves}. CRM Monogr Series, vol.11,AMS.,2000.

\bibitem{Bo} A. Borel, \emph{Sur la cohomologie des espaces fibr\'{e}s principaux et des espaces homog\`{e}nes de
groupes de Lie compacts}, Ann. of Math. (2) \textbf{57}(1953),115-207.

\bibitem{Bo2} A. Borel, \emph{Cohomologie de $SL_n$ et valeurs de fonctions z\^{e}ta aux points entiers,}Ann.
Scuola Norm. Sup. Pisa Cl. Sci. (4) \textbf{4} (1977), 613-636; Errata, 7, (1980), 373.

\bibitem{B.J1} J. Browkin,\emph{ Construction of elements in Bloch group}, unpublished, 2012.

\bibitem{B.J2} J. Browkin, H. Gangl, \emph{Tame kernels  and second regulators of number fields and their subfields}, J.K-Theory,\textbf{12}(2013),137-165.

\bibitem{B.J3} J. Browkin, H. Gangl, \emph{Tame and wild kernels of quadratic imaginary number fields }, Math. Comp.\textbf{68}(1999), no.225, 291¨C305.

\bibitem{DB} D.Burns, R.de Jeu, H.Gangl, \emph{On special elememts in higher algebraic K-theory and the Lichtenbaum-Gross Conjecture}.Adv. Math. \textbf{230}(2012), 1502-1529.
\bibitem{C} J.COates,A. Raghuram,Anupam Saikia, R. Sujatha,\emph{The Bloch-Kato Conjecture for the Riemann Zeta Function}. London Math. Soc. Lecture Notes 418, 2015.

\bibitem{Go} A. B. Goncharov, \emph{Geometry of configuration, polylogarithms, and motivic cohomology}, Adv. Math. \textbf{114}(1995),197-318.

\bibitem{Gr} B. H. Gross, \emph{On the values of Artin L-functions}, unpublished, 1980.


\bibitem{KNF} M. Kolster, T. Nguyen Quang Do, V. Fleckinger, \emph{Twisted S-units, p-adic class number formulas, and the Lichtenbaum conjectures}, Duke Math. J., \textbf{84} (1996), 679-717; errata90 (1997), 641-643.

\bibitem{Li}S. Lichtenbaum, \emph{Values of zeta-functions, \'{e}tale cohomology, and algebraic K-theory}, Lecture Notes in Math. \textbf{342} (1973), 489-501.

\bibitem{MW} B. Mazur, A. Wiles, \emph{Class fields of abelian extensions of $\mathbb{Q}$}, Invent. Math. \textbf{76}, no. 2(1984), 179-330.

\bibitem{R.J} J. Rognes and C. Weibel, \emph{Two-primary algebraic K-theory of rings of integers in number fields}.

\bibitem{S.A} A. A. Suslin,  \emph{$K_3$ of a field and the Bloch group}. Translated in Proc. Steklov Inst. Math. 1991, no. 4,217-239.

\bibitem{W.C} C. Weibel, \emph{Algebraic K-theory of rings of integers in local and global fields}, Handbook of K-theory (2005),139-190.

\bibitem{W.L} L. Washington, \emph{Introduction to cyclotomic fields}. Graduate text in mathematic 83, Springer, 1982.

\bibitem{Wi} A. Wiles, \emph{The Iwasawa conjecture for totally real fields}, Ann. of Math.(2)\textbf{131}(1990),493-540.

\bibitem{Zh} L. Zhang, K. Xu, \emph{The tame kernel of $\mathbb{Q}(\zeta_5)$ is trivial}. Math. Comp. \textbf{85}(2016), no.299, 1523-1538.


\end{thebibliography}

\end{document}